\documentclass[a4paper,10pt]{amsart}
\usepackage{tikz-cd, amssymb, amsthm, hyperref, breakurl}
\usepackage[nameinlink]{cleveref}
\bibliographystyle{plain}

\theoremstyle{plain}
\newtheorem{theorem}{Theorem}
\newtheorem{lemma}[theorem]{Lemma}
\newtheorem{corollary}[theorem]{Corollary}

\title[Simple graphs are coreflective in groups under free groups]{The category of simple graphs is coreflective in the comma category of groups under the free group functor}
\author{Christian Frank}

\begin{document}
 
 \begin{abstract}
  We show that the comma category $(\mathcal{F}\downarrow\mathbf{Grp})$ of groups under the free group functor $\mathcal{F}: \mathbf{Set} \to \mathbf{Grp}$ contains the category $\mathbf{Gph}$ of simple graphs as a full coreflective subcategory.  More broadly, we generalize the embedding of topological spaces into Steven Vickers' category of topological systems to a simple technique for embedding certain categories into comma categories, then show as a straightforward application that simple graphs are coreflective in $(\mathcal{F}\downarrow\mathbf{Grp})$.
 \end{abstract}
 
 \maketitle
 
 \section{Introduction}
 
 In his 1989 text ``Topology Via Logic'' \cite{MR1002193}, Steven Vickers develops a category $\mathbf{TopSys}$ of topological systems which is noteworthy for containing both the category of topological spaces as a coreflective subcategory and the category of locales as a reflective subcategory.  In \cite{MR1474566}, Adamek and Pedicchio observe that $\mathbf{TopSys}^\mathrm{op}$ is equivalent to the comma category $(\mathbf{Frm}\downarrow\mathbf{CABA})$ of frames over complete atomic boolean algebras, and further show that $\mathbf{Top}^\mathrm{op}$ is equivalent to the regularly epireflective subcategory of monomorphisms.
 
 Rewritten, $\mathbf{Top}$ is the coreflective subcategory of epimorphisms in
 \[
  (\mathbf{Frm}\downarrow\mathbf{CABA})^\mathrm{op} = (\mathbf{CABA}^\mathrm{op}\downarrow\mathbf{Frm}^\mathrm{op}).
 \]
 As the category of complete atomic boolean algebras is opposite the category of sets via taking the powerset, a topological system can be interpreted as a localic map out of a powerset, ie as an object in the comma category $(\mathcal{P}\downarrow\mathbf{Loc})$ of locales under the powerset locale functor $\mathcal{P}: \mathbf{Set} \to \mathbf{Loc}$.
 
 If $2$ is the two-open locale, the hom functor $\hom_\mathbf{Loc}(2,-)$ assigns to each locale its set of locale points; this functor is right adjoint to $\mathcal{P}$.  By adjunction, the category of topological systems is also equivalent to $(\mathbf{Set}\downarrow\hom_\mathbf{Loc}(2,-))$.
 
 Note that $\mathbf{Top}$ fits right in the middle of the adjunction $\mathcal{P}\dashv\hom_\mathbf{Loc}(2,-)$.  If $\mathcal{D}_T: \mathbf{Set} \to \mathbf{Top}$ is the discrete space functor, $\mathcal{U}: \mathbf{Top} \to \mathbf{Set}$ the forgetful functor right adjoint to $\mathcal{D}_T$, and $\mathcal{L\dashv S}$ is the locale-spectrum adjunction between $\mathbf{Top}$ and $\mathbf{Loc}$ (see eg Chapter 2 of Picado and Pultr \cite{MR2868166} for details), then $\mathcal{P} = \mathcal{L\circ D}_T$ and $\hom_\mathbf{Loc}(2,-) = \mathcal{U\circ S}$.
 
 In this language, $\mathbf{TopSys} \cong (\mathcal{L\circ D}_T\downarrow\mathbf{Loc}) \cong (\mathbf{Set}\downarrow\mathcal{U\circ S})$.  Adjuncting into the category $\mathbf{Top}$, we have $\mathbf{TopSys} \cong (\mathcal{D}_T\downarrow\mathcal{S})$.  Here $\mathbf{Top}$ embeds naturally as a composition of unit with counit: the map $\eta^{\mathcal{L\dashv S}}\circ\varepsilon^{\mathcal{D}_T\dashv\mathcal{U}}: \mathcal{D}_T\circ\mathcal{U} \to \mathrm{id} \to \mathcal{S\circ L}$ gives for each space an object of the comma category $(\mathcal{D}_T\downarrow\mathcal{S})$.
 
 In this paper we investigate conditions for which categories in the center of two composable adjoint pairs may be embedded nicely into the appropriate comma categories through this process.
 
 \section{The unit-counit composite}\label{composite}
 
 Let $\mathbf{B},\mathbf{C},\mathbf{D}$ be complete and cocomplete categories.  Let
 \begin{align*}
  F: \mathbf{B}&\rightleftarrows\mathbf{C} :G,\\
  L: \mathbf{C}&\rightleftarrows\mathbf{D} :R,
 \end{align*}
 with $F \dashv G$ and $L \dashv R$, and define $\gamma: \mathbf{C} \to (F\downarrow R)$ by
 \begin{align*}
  \gamma(A) &: F(G(A)) \to R(L(A)),\\
  \gamma(A) &:= \eta^{L \dashv R}_A\circ\varepsilon^{F \dashv G}_A,\\
  \gamma(f) &:= (G(f)\downarrow L(f)).
 \end{align*}
 
 We ask what conditions will ensure that $\gamma$ embeds $\mathbf{C}$ as a reflective subcategory of the equivalent comma categories
 \[
  (\mathbf{B}\downarrow(G\circ R)) \cong (F\downarrow R) \cong ((L\circ F)\downarrow\mathbf{D}).
 \]
 
 We can say immediately that $\gamma$ is faithful as soon as either $G$ is or $L$ is.
 
 Fullness is more precarious.  For $A,B$ objects in $\mathbf{C}, f_\mathbf{B}\in\hom_\mathbf{B}(G(A),G(B))$, $f_\mathbf{D}\in\hom_\mathbf{D}(L(A),L(B))$, assume that we have a diagram in $\mathbf{C}$:
 \begin{equation}
  \begin{tikzcd}\label{fullness-initial}
   (F\circ G)(A) \arrow[swap]{d}{\gamma(A)} \arrow{r}{F(f_\mathbf{B})} &(F\circ G)(B) \arrow{d}{\gamma(B)}\\
   (R\circ L)(A) \arrow[swap]{r}{R(f_\mathbf{D})} &(R\circ L)(B)
  \end{tikzcd}
 \end{equation}
 
 We are interested in conditions that will guarantee that the pair $(f_\mathbf{B},f_\mathbf{D})$ is in the image of $\gamma$, that is, conditions under which the commutativity of (\ref{fullness-initial}) implies existence of $f\in\hom_\mathbf{C}(A,B)$ satisfying both $f_\mathbf{B} = G(f)$ and $f_\mathbf{D} = L(f)$.
 
 The above diagram expands to:
 \begin{equation}
  \begin{tikzcd}\label{fullness-unfilled}
   (F\circ G)(A) \arrow[swap]{d}{\varepsilon^{F\dashv G}} \arrow{r}{F(f_\mathbf{B})} &(F\circ G)(B) \arrow{d}{\varepsilon^{F\dashv G}}\\
   A \arrow[swap]{d}{\eta^{L\dashv R}} &B \arrow{d}{\eta^{L\dashv R}}\\
   (R\circ L)(A) \arrow[swap]{r}{R(f_\mathbf{D})} &(R\circ L)(B)
  \end{tikzcd}
 \end{equation}
 
 Suppose there is some $f'\in\hom_\mathbf{C}(A,B)$ filling in the diagram:
 \begin{equation}
  \begin{tikzcd}\label{fullness-filled}
   (F\circ G)(A) \arrow[swap]{d}{\varepsilon^{F\dashv G}} \arrow{r}{F(f_\mathbf{B})} &(F\circ G)(B) \arrow{d}{\varepsilon^{F\dashv G}}\\
   A \arrow[swap]{d}{\eta^{L\dashv R}} \arrow[dashed]{r}{f'} &B \arrow{d}{\eta^{L\dashv R}}\\
   (R\circ L)(A) \arrow[swap]{r}{R(f_\mathbf{D})} &(R\circ L)(B)
  \end{tikzcd}
 \end{equation}
 Then as the diagrams:
 \begin{equation*}
  \begin{tikzcd}
   (F\circ G)(A) \arrow[swap]{d}{\varepsilon^{F\dashv G}} \arrow{r}{F(G(f'))} &(F\circ G)(B) \arrow{d}{\varepsilon^{F\dashv G}}\\
   A \arrow{r}{f'} &B
  \end{tikzcd}
  \begin{tikzcd}
   A \arrow[swap]{d}{\eta^{L\dashv R}} \arrow{r}{f'} &B \arrow{d}{\eta^{L\dashv R}}\\
   (R\circ L)(A) \arrow[swap]{r}{R(L(f'))} &(R\circ L)(B)
  \end{tikzcd}
 \end{equation*}
 each also commute, we must have $f_\mathbf{B} = G(f')$ and $f_\mathbf{D} = L(f')$ by the universality of the counit and unit respectively.
 
 This means that the condition that every diagram of shape (\ref{fullness-unfilled}) can be filled in as (\ref{fullness-filled}) is equivalent to $\gamma$ being full.
 
 One easy way to achieve this is for $\varepsilon^{F\dashv G}$ to be epic at $A$, for $\eta^{L\dashv R}$ to be monic at $B$, and for one of those two to be strong.  The first two conditions would be guaranteed by faithfulness of $G$ and $L$ respectively.  Let's make that assumption.  Since $\mathbf{C}$ is complete and cocomplete, Proposition 4.3.7(3) of Borceux book 1 \cite{MR1291599} tells that $\varepsilon^{F\dashv G}$ will be strong epic as soon as $G$ is conservative, and that $\eta^{L\dashv R}$ will be strong monic as soon as $L$ is conservative.
 
 We conclude: 
 \begin{lemma}\label{full-faithful-hypothesis}
  If $G$ and $L$ are both faithful and at least one of the two is conservative, then $\gamma$ is full and faithful.
 \end{lemma}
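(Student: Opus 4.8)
The plan is to treat faithfulness and fullness separately, leaning on the reductions already carried out above. Faithfulness costs nothing: the excerpt observes that $\gamma$ is faithful as soon as either $G$ or $L$ is, and here both are faithful by hypothesis, so that half is immediate.

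For fullness I would invoke the equivalence established just before the statement: $\gamma$ is full precisely when every commuting diagram of shape (\ref{fullness-unfilled}) can be completed to one of shape (\ref{fullness-filled}). So I would fix such a diagram and recast the problem as a single diagonal fill-in problem. The outer rectangle of (\ref{fullness-unfilled}) is a commutative square with left edge the counit component $\varepsilon^{F\dashv G}_A$, right edge the unit component $\eta^{L\dashv R}_B$, top $\varepsilon^{F\dashv G}_B\circ F(f_\mathbf{B})$, and bottom $R(f_\mathbf{D})\circ\eta^{L\dashv R}_A$; its commutativity is exactly the hypothesis on (\ref{fullness-unfilled}). A diagonal $f'\colon A\to B$ for this square is then precisely a morphism making both triangles of (\ref{fullness-filled}) commute.

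The crux is producing that diagonal, and here I would use orthogonality: a strong epimorphism is left orthogonal to every monomorphism and, dually, every epimorphism is right orthogonal to every strong monomorphism, each situation yielding a (unique) diagonal. The hypotheses supply exactly the data needed. Faithfulness of $G$ forces $\varepsilon^{F\dashv G}$ to be pointwise epic and faithfulness of $L$ forces $\eta^{L\dashv R}$ to be pointwise monic, these being the standard characterizations of faithful adjoints; meanwhile, by Proposition 4.3.7(3) of \cite{MR1291599}, conservativity of $G$ promotes $\varepsilon^{F\dashv G}$ to strong epic and conservativity of $L$ promotes $\eta^{L\dashv R}$ to strong monic. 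Since at least one of $G,L$ is conservative and both are faithful, the square falls into one of the two orthogonality cases---strong epi against mono, or epi against strong mono---and in either case the diagonal $f'$ exists. The main obstacle is keeping this bookkeeping straight: one must confirm the rectangle genuinely is a commuting lifting square and then correctly pair the strong side with the merely epic or monic side. Once $f'$ is in hand, the reasoning recorded after (\ref{fullness-filled}) closes the loop, since universality of the counit and unit give $f_\mathbf{B}=G(f')$ and $f_\mathbf{D}=L(f')$, whence $(f_\mathbf{B},f_\mathbf{D})=\gamma(f')$ and $\gamma$ is full.
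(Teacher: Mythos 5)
Your proposal is correct and follows essentially the same route as the paper: faithfulness from faithfulness of $G$ (or $L$), and fullness by recognizing the outer rectangle of (\ref{fullness-unfilled}) as a lifting square whose left edge is pointwise epic (from faithfulness of $G$), whose right edge is pointwise monic (from faithfulness of $L$), with one side promoted to strong via Borceux's Proposition 4.3.7(3), so orthogonality supplies the diagonal $f'$. The only nit is terminological --- an epimorphism is \emph{left} orthogonal to a strong monomorphism, not right orthogonal --- but since you place $\varepsilon^{F\dashv G}_A$ and $\eta^{L\dashv R}_B$ on the correct sides of the square, the argument is unaffected.
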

 
 Note that this is not a necessary condition.  In the case of topological systems, $\mathcal{L}$ is not faithful, as can be seen by looking at maps from a nonempty space to an indiscrete space with more than one element.
 
 We settle here and turn our attention to the existence of a left adjoint to $\gamma$.
 
 Following the proof of Theorem 5.2.3 in Rydeheard and Burstall \cite{MR999925}, $(F\downarrow R)$ is complete because $R$ preserves limits and cocomplete because $F$ preserves colimits.  Those limits are constructed from limits in $\mathbf{B}$ and $\mathbf{D}$, so $\gamma$ preserves limits as soon as both $G$ and $L$ preserve limits themselves.
 
 $G$ is a right adjoint, so this is only a restriction on $L$.  We mention that in the (dual) case of topological systems, the forgetful functor $\mathcal{U}: \mathbf{Top} \to \mathbf{Set}$ has both a left adjoint and a right adjoint.
 
 By \Cref{full-faithful-hypothesis} and the special adjoint functor theorem, we conclude the following.
 
 \begin{theorem}\label{main-theorem}
  Let $\mathbf{B},\mathbf{C},\mathbf{D}$ be complete and cocomplete categories, where $\mathbf{C}$ admits a cogenerating set and is well-powered.  Let
  \begin{align*}
   F: \mathbf{B}&\rightleftarrows\mathbf{C} :G,\\
   L: \mathbf{C}&\rightleftarrows\mathbf{D} :S,R,
  \end{align*}
  with $F \dashv G$ and $S\dashv L \dashv R$.  Assume further that both $G$ and $L$ are faithful, and that at least one of the two is conservative.  Then $\mathbf{C}$ is a reflective subcategory of $(\mathbf{B}\downarrow(G\circ R)) \cong (F\downarrow R) \cong ((L\circ F)\downarrow\mathbf{D})$ embedded by:
  \begin{align*}
   \gamma &: \mathbf{C} \to (F\downarrow R),\\
   \gamma(A) &:= \eta^{L \dashv R}_A\circ\varepsilon^{F \dashv G}_A,\\
   \gamma(f) &:= (G(f)\downarrow L(f)),
  \end{align*}
  for $A$ and $f$ respectively an object and an arrow of $\mathbf{C}$.
 \end{theorem}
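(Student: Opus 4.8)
The plan is to obtain the reflection by producing a left adjoint to $\gamma$ via the special adjoint functor theorem, and then to invoke the standard fact that a fully faithful functor admitting a left adjoint presents its domain as a reflective subcategory. Full faithfulness of $\gamma$ is already in hand from \Cref{full-faithful-hypothesis}, whose hypotheses hold verbatim here: $G$ and $L$ are faithful, and at least one of the two is conservative. What remains is to check the hypotheses of the adjoint functor theorem; since completeness, well-poweredness, and the existence of a cogenerating set for $\mathbf{C}$ are assumed outright, the only substantive task is to show that $\gamma$ preserves all small limits.

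To that end I would first recall, following the proof of Theorem 5.2.3 in \cite{MR999925}, that a limit in $(F\downarrow R)$ is built by taking the corresponding limits in $\mathbf{B}$ and $\mathbf{D}$ and inducing the connecting morphism through the universal property of $R$ applied to the $\mathbf{D}$-limit, using that $R$, as a right adjoint, preserves limits. Because $\gamma$ sends an object $A$ to the triple with $\mathbf{B}$-component $G(A)$ and $\mathbf{D}$-component $L(A)$, it follows that $\gamma$ preserves a given limit exactly when both $G$ and $L$ do. Now $G$ preserves all limits as the right adjoint of $F$; and here is the one place where the theorem genuinely strengthens the lemma: the hypothesis $S\dashv L\dashv R$ makes $L$ a right adjoint as well, so $L$ too is continuous. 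Hence $\gamma$ preserves all small limits.

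With $\gamma$ a limit-preserving functor out of the complete, well-powered category $\mathbf{C}$ equipped with a cogenerating set, the special adjoint functor theorem supplies a left adjoint $\Lambda\dashv\gamma$. Since $\gamma$ is fully faithful, the counit $\Lambda\circ\gamma \to \mathrm{id}_{\mathbf{C}}$ is an isomorphism, so $\gamma$ embeds $\mathbf{C}$ as a full reflective subcategory of $(F\downarrow R)$, with $\Lambda$ serving as the reflector. The displayed equivalences $(\mathbf{B}\downarrow(G\circ R)) \cong (F\downarrow R) \cong ((L\circ F)\downarrow\mathbf{D})$ are the usual comma-category reindexings along $F\dashv G$ and $L\dashv R$, so the reflective embedding transports unchanged to either alternative presentation.

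I expect the entire argument to be routine once the limit-preservation of $L$ is recognized; that recognition is the crux, and it is exactly what the additional adjoint $S$ buys. No difficulty should arise in the appeal to the special adjoint functor theorem, whose smallness hypotheses were imposed on $\mathbf{C}$ precisely for this step, nor in the comma-category limit computation, which is pure diagram chasing through the universal properties of the two unit-counit pairs.
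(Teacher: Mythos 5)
Your proposal is correct and follows the paper's own argument essentially verbatim: full faithfulness from \Cref{full-faithful-hypothesis}, limit preservation of $\gamma$ via the Rydeheard--Burstall construction of limits in $(F\downarrow R)$ together with the continuity of $G$ (right adjoint to $F$) and of $L$ (right adjoint to $S$), and then the special adjoint functor theorem using the completeness, well-poweredness, and cogenerating set of $\mathbf{C}$. The paper even makes your closing observation explicitly, noting that under these hypotheses $L$ preserving limits is equivalent to $L$ admitting the left adjoint $S$.
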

 
 Note that our hypotheses on $\mathbf{C}$ imply that $L$ preserving limits is equivalent to $L$ admitting a left adjoint $S$.
 
 \section{Simple graphs}\label{simple-graphs}
 
 The theorem requires an adjoint triple $S\dashv L\dashv R$.  One such triple is $\mathcal{D\dashv V\dashv I}$ between $\mathbf{Set}$ and the category $\mathbf{Gph}$ of simple graphs, where $\mathcal{V}: \mathbf{Gph} \to \mathbf{Set}$ is the functor taking a simple graph to its set of vertices, and $\mathcal{D,I}: \mathbf{Set} \to \mathbf{Gph}$ are the discrete and indiscrete graph functors respectively.
 
 Note that the category $\mathbf{Gph}$ here is the category of \emph{reflexive} graphs with no loops, where graph homomorphisms are permitted to collapse adjacent vertices to a single vertex.  This is following the nlab article \cite{nlab:category_of_simple_graphs}.  $\mathbf{Gph}$ is a Grothendieck quasitopos, which implies that it is locally presentable, which in turn (along with cocompleteness) implies that it is co-well-powerered, see \cite{nlab:well-powered_category}.
 
 Given a simple graph $G$, the right angled Artin group of $G$ is the quotient of the free group on the vertices of $G$ by the commutators of adjacent vertex pairs.  This gives a functor $\mathcal{A}: \mathbf{Gph} \to \mathbf{Grp}$.
 
 It is known (referenced in \cite{4029945}) that $\mathcal{A}$ has a right adjoint $\mathcal{C}: \mathbf{Grp} \to \mathbf{Gph}$, which takes a group $H$ to its commutation graph $\mathcal{C}(H)$.  The vertices of $\mathcal{C}(H)$ are the elements of $H$, where two vertices are defined to be adjacent iff they commute in $H$.
 
 \begin{corollary}
  $\mathbf{Gph}$ embeds as a full coreflective subcategory of $(\mathcal{F}\downarrow\mathbf{Grp})$, where $\mathcal{F}: \mathbf{Set} \to \mathbf{Grp}$ is the free group functor.
 \end{corollary}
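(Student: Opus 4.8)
The plan is to recognize the corollary as the \emph{dual} of a direct application of \Cref{main-theorem}. The two adjoint situations at hand, namely the triple $\mathcal{D}\dashv\mathcal{V}\dashv\mathcal{I}$ between $\mathbf{Set}$ and $\mathbf{Gph}$ and the adjunction $\mathcal{A}\dashv\mathcal{C}$ between $\mathbf{Gph}$ and $\mathbf{Grp}$, put $\mathbf{Gph}$ in the center, but with $\mathcal{A}$ a left adjoint rather than a right adjoint, so the orientation is opposite to the one \Cref{main-theorem} expects. To fit the shape $F\dashv G$, $S\dashv L\dashv R$ I would pass to opposite categories, taking $\mathbf{C}=\mathbf{Gph}^{\mathrm{op}}$, $\mathbf{B}=\mathbf{Grp}^{\mathrm{op}}$, $\mathbf{D}=\mathbf{Set}^{\mathrm{op}}$, with $F=\mathcal{C}^{\mathrm{op}}$ and $G=\mathcal{A}^{\mathrm{op}}$ (so $F\dashv G$ comes from $\mathcal{A}\dashv\mathcal{C}$), and with $S=\mathcal{I}^{\mathrm{op}}$, $L=\mathcal{V}^{\mathrm{op}}$, $R=\mathcal{D}^{\mathrm{op}}$ (so $S\dashv L\dashv R$ comes from $\mathcal{D}\dashv\mathcal{V}\dashv\mathcal{I}$). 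Applying \Cref{main-theorem} in these opposite categories makes $\mathbf{Gph}^{\mathrm{op}}$ a full reflective subcategory of the relevant comma category, and dualizing once more converts this into the desired full coreflective embedding of $\mathbf{Gph}$.

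Next I would identify the target comma category using the form $(\mathbf{B}\downarrow(G\circ R))$. Here $G\circ R=\mathcal{A}^{\mathrm{op}}\circ\mathcal{D}^{\mathrm{op}}=(\mathcal{A}\circ\mathcal{D})^{\mathrm{op}}$, and the key computation is that $\mathcal{A}\circ\mathcal{D}$ is the free group functor $\mathcal{F}$: the discrete graph $\mathcal{D}(X)$ has no edges among distinct vertices, so its right angled Artin group is presented by the generating set $X$ with no commutation relations, which is the free group on $X$. Hence $(\mathbf{B}\downarrow(G\circ R))=(\mathbf{Grp}^{\mathrm{op}}\downarrow\mathcal{F}^{\mathrm{op}})$, and by the standard identity $(P\downarrow Q)^{\mathrm{op}}\cong(Q^{\mathrm{op}}\downarrow P^{\mathrm{op}})$, applied with $P=\mathcal{F}$ and $Q=\mathrm{id}_{\mathbf{Grp}}$, this is exactly $(\mathcal{F}\downarrow\mathbf{Grp})^{\mathrm{op}}$. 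Thus reflectivity of $\mathbf{Gph}^{\mathrm{op}}$ there is precisely coreflectivity of $\mathbf{Gph}$ in $(\mathcal{F}\downarrow\mathbf{Grp})$. Unwinding the embedding, a graph $\Gamma$ is sent to the canonical quotient $\mathcal{F}(\mathcal{V}\Gamma)\to\mathcal{A}(\Gamma)$ of the free group on its vertices onto its Artin group, namely $\mathcal{A}$ applied to the counit $\mathcal{D}\mathcal{V}\Gamma\to\Gamma$.

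It then remains to verify the hypotheses of \Cref{main-theorem} for this opposite setup. Faithfulness of $G=\mathcal{A}^{\mathrm{op}}$ and $L=\mathcal{V}^{\mathrm{op}}$ reduces to faithfulness of $\mathcal{A}$ and $\mathcal{V}$, both immediate since a graph homomorphism is determined by its effect on vertices and $\mathcal{A}$ records this effect as the action on the generating set. Since $\mathcal{V}$ is not conservative, I would instead prove that $\mathcal{A}$ is: given $f\colon\Gamma\to\Gamma'$ with $\mathcal{A}(f)$ an isomorphism, abelianizing shows that the induced map $\mathbb{Z}^{\mathcal{V}\Gamma}\to\mathbb{Z}^{\mathcal{V}\Gamma'}$ carries basis elements to basis elements and is invertible, forcing $f$ to be a bijection on vertices; and because two distinct generators of a right angled Artin group commute if and only if the corresponding vertices are adjacent, the invertibility of $\mathcal{A}(f)$ forces $f$ to both preserve and reflect adjacency, so $f$ is an isomorphism of graphs. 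Finally, \Cref{main-theorem} requires $\mathbf{C}=\mathbf{Gph}^{\mathrm{op}}$ to be well-powered with a cogenerating set; these translate to $\mathbf{Gph}$ being co-well-powered with a generating set, both consequences of local presentability as already noted, while completeness and cocompleteness of $\mathbf{Set}^{\mathrm{op}}$, $\mathbf{Grp}^{\mathrm{op}}$, $\mathbf{Gph}^{\mathrm{op}}$ are inherited from $\mathbf{Set}$, $\mathbf{Grp}$, $\mathbf{Gph}$.

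The main obstacle is the conservativity of $\mathcal{A}$, the only point at which the specific structure of right angled Artin groups enters and the only hypothesis of \Cref{main-theorem} that is not purely formal; everything else is adjunction bookkeeping or citation. I expect the abelianization step to settle bijectivity on vertices cleanly, with the reflection-of-adjacency step resting on the standard fact that two non-adjacent generators of a right angled Artin group generate a free group of rank two and hence do not commute. Once conservativity is established, the remainder of the argument is the dualization together with the identification $\mathcal{A}\circ\mathcal{D}=\mathcal{F}$, after which the corollary is a direct invocation of \Cref{main-theorem}.
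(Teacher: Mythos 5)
Your proposal is correct and follows essentially the same route as the paper: dualize \Cref{main-theorem}, instantiate it with $\mathcal{D}\dashv\mathcal{V}\dashv\mathcal{I}$ and $\mathcal{A}\dashv\mathcal{C}$, check faithfulness of $\mathcal{A}$ and $\mathcal{V}$, conservativity of $\mathcal{A}$, the (co)generator and (co-)well-poweredness of $\mathbf{Gph}$, and the identity $\mathcal{F}=\mathcal{A}\circ\mathcal{D}$. You in fact supply more detail than the paper does, notably the abelianization argument for conservativity of $\mathcal{A}$, which the paper simply asserts.
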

 \begin{proof}
  The situation looks like:
 \begin{align*}
  \mathcal{D,I}: \mathbf{Set}&\rightleftarrows\mathbf{Gph} :\mathcal{V},\\
  \mathcal{A}: \mathbf{Gph}&\rightleftarrows\mathbf{Grp} :\mathcal{C},
 \end{align*}
 with $\mathcal{D \dashv V \dashv I}$ and $\mathcal{A \dashv C}$.  Here $\mathcal{V}$ and $\mathcal{A}$ are both faithful, and $\mathcal{A}$ is conservative.  The single-vertex graph is a generator of $\mathbf{Gph}$, and as noted earlier, the category is co-well-powered.
 
 Because $\mathcal{F = A\circ D}$, the result now follows directly from the dual to \Cref{main-theorem}.
 \end{proof}
 
 The embedding takes a graph to the quotient map from the free group on the vertices to the right angled Artin group.  A homomorphism out of a free group is isomorphic to a simple graph when it is surjective and its kernel is generated by commutators of generators.
 
 Our comma category $(\mathcal{F}\downarrow\mathbf{Grp})$ also contains the category $\mathbf{Grp}$ as a reflective subcategory, embedded by taking a group to its counit under the free-forgetful adjunction.  The left adjoint to this embedding is simply the codomain projection.
 
 \bibliography{bibliography}{}
\end{document}